\newcommand{\Tr}{\mbox{\rm tr}}
\newcommand{\Var}{\mbox{\rm Var}}
\newcommand{\E}{{\mathbb E}}
\DeclareMathOperator{\argmax}{arg\,max}
\DeclareMathOperator{\argmin}{arg\,min}
\DeclareMathOperator{\vect}{vec}
\newtheorem{theorem}{Theorem}[section]
\newtheorem{lemma}[theorem]{Lemma}
\newtheorem{proposition}[theorem]{Proposition}
\newtheorem{assumption}[theorem]{Assumption}
\newtheorem{problem}[theorem]{Problem}
\newtheorem{definition}[theorem]{Definition}
\newtheorem{remark}[theorem]{Remark}
\newcommand{\update}[1]{{\color{black}#1}}
\newcommand{\rev}[1]{{\color{black} #1}}
\newcommand{\giulio}[1]{{\color{black} #1}}
\renewcommand{\footnoterule}{
  \kern -2pt
  \hrule width 0.3\textwidth height .5pt
  \kern 2pt
}
\title{Preserving Privacy of Finite Impulse Response Systems}
\author{
Giulio Bottegal, Farhad~Farokhi, and Iman Shames
\thanks{F. Farokhi and I. Shames are with the University of Melbourne, Australia. G. Bottegal is with TU Eindhoven, The Netherlands. }
\thanks{e-mails: ffarokhi@unimelb.edu.au (F. Farokhi), ishames@unimelb.edu.au (I. Shames), g.bottegal@tue.nl (G. Bottegal)}
\thanks{The work was supported by a McKenzie Fellowship and the Australian Research Council (LP130100605).}
}
\begin{document}
\maketitle

\begin{abstract}
Adding input and output noises for increasing  model identification error of finite impulse response (FIR) systems is considered. This is motivated by the desire to protect the model of the  system as a trade secret by rendering model identification techniques ineffective. Optimal filters for constructing additive noises that maximizes the identification error subject to maintaining the closed-loop performance degradation below a limit are constructed. 
Furthermore, differential privacy is used for designing output noises that preserve the privacy of the model. 
\end{abstract}

\section{Introduction}
Innovative industries invest resources (e.g., money and time for research and development) to construct new systems and to improve the performance of the previously-deployed ones. To generate revenue and offset the cost of research, they ideally want to capitalize on their achievements. This is sometimes done by restricting the use of their ideas through patents or by hiding the features of their systems as trade secrets. 
When opting for trade secrets, reverse engineering techniques can be used by competitors to unravel their secrets. For instance, model identification tools can be utilized to identify a black-box system or to extract the parameters of a gray-box system. The gained information can be then used to reverse the financial gains. This motivates the use of methods that can render reverse-engineering techniques ineffective. Such methods, however, most often degrade the performance of the system. Therefore, a framework for balancing the need for preserving the trade secrets against maintaining the performance of the systems is required. 

In this paper, linear time-invariant discrete-time finite impulse response (FIR) system are considered. Specifically, the idea of adding noises to the input and output for increasing the error of model identification is explored. A bound on closed-loop performance degradation caused by the additive noise is enforced. An optimal filter for constructing the additive input and output noises that maximizes the identification error subject to maintaining the performance degradation below a threshold is constructed. This is done for both known and unknown input sequences. The former is  useful to make the identification difficult for given inputs, such as the optimal experimental design in the model identification literature~\cite{rojas2007robust}.
The latter, which requires statistics of the input, can accommodate the belief of the  designer on the reverse engineering techniques, e.g., a frequently used input for model identification purposes is a sequence of i.i.d.\footnote{i.i.d. stands for independently and identically distributed.} Gaussian noise~\cite{gevers2006personal}. Finally, differential privacy framework is  used for designing output additive noises that make the system identification difficult without any assumptions on the utilized inputs. 

In differential privacy literature, noises are added to the outcome of statistical queries from databases to preserve the privacy of individuals in the database~\cite{Dwork2006}. This framework was more recently used in dynamical systems~\cite{le2014differentially,huang2014cost}. In differential privacy literature, most often, additive Laplace noises are used and the parameters of the noise are selected according to the sensitivity of the outcome to variations in the data (that should be kept private). However, weaker variants of differential privacy can be achieved by additive Gaussian noises. This is advantageous as adding Laplace noise can make the designer's task considerably more difficult (in terms of utilizing the outputs of the system), e.g., optimal state estimation when measurements are corrupted by  Laplace noise results in non-linearities and memory issues~\cite{farokhi2016optimal}.

To the best of our knowledge, the differential privacy has not been explored in the context of preserving the privacy of dynamical systems with the aim of protecting the model as a trade secret. This has been explored thoroughly in one of the sections of the paper. In addition, in this paper, the problem of preserving the privacy of the systems is cast as a concrete optimization problem that balances the need for keeping the privacy with that of the maintaining the performance. 
This provides a different approach to that of differential privacy in which constraints on the performance degradation cannot be enforced directly to optimally balance between privacy and performance. Finally, note that the problem of releasing the dynamical model of a system under privacy constraints was considered in~\cite{le2013privacy}. In this paper, we take a different approach, i.e., we do not release the model of the system. We want to ensure that inferring an exact model relating inputs and outputs is made difficult.

The rest of the paper is organized as follows. The design of optimal additive input and output noise to hinder system identification is studied in Section~\ref{sec:optimalnoise}. Section~\ref{sec:diffprivacy} uses the differential privacy for constructing additive output noises. A numerical example is provided in Section~\ref{sec:example}. Some concluding remarks are presented in Section~\ref{sec:conc}.

\section{Optimal Additive Noise}\label{sec:optimalnoise}
Here, we investigate the use of additive noise to preserve the privacy of the model information assuming that the eavesdropper uses the best linear unbiased estimate. These results are subsequently generalized (to the case where the model of the eavesdropper is not known) when using the differential privacy framework.
\subsection{Problem Formulation} \label{sec:problem}
In this paper, \rev{for sake of simplicity of presentation, linear single-input single-output (SISO) time-invariant discrete-time systems are considered. All the derivations can be extended to multi-input multi-output (MIMO) systems.} The system is described by the following equation 
\begin{equation}
y_t = H(q^{-1})r_t + e_t,
\end{equation}
where $H(q^{-1})$ represents the transfer function of the system, which is driven by the reference input $r_t$. The output $y_t$ is corrupted by additive white Gaussian noise with variance $\sigma^2$, which is represented by $e_t$. Assume that $H(q^{-1})$ can be well-represented by a finite-impulse response (FIR) system of order $n_h$, i.e.,
$H(q^{-1}) = \sum_{k=0}^{n_h-1} h_k q^{-k}$.
Hence, the dynamics of the system is completely characterized by the vector of coefficients $h := [h_0\,\ldots\,h_{n_h-1}]^\top$. \rev{In this paper, we assume null initial conditions (that is $r_t = 0$ for $t\leq 0$), though extension to any initial condition is straightforward due to the linearity of the underlying system.}

Assume that an adversary is interested in inferring on the process relating $r_t$ to $y_t$ by attempting to estimate $h$ from a set of $N$ input/output measurements $\{r_t,\,y_t\}_{t=1}^N$. To complicate the identification process, an additional component (which is not accessible to the adversary) can be added to the input or to the output of the system to lower the identification accuracy. Let $w_t$ capture such an additional component, which changes the model of the system as
\begin{equation}
y_t = H(q^{-1})r_t + e_t+w_t.
\end{equation} 
This term can capture both the additive input and output noise as discussed, in detail, in what follows.

\begin{assumption} \label{assume:1} The malicious entity is unaware of the presence of the additive input or output noise.
\end{assumption}

This assumption is rather conservative. When using the differential privacy framework in the next section, we can avoid such assumptions. Considering a FIR model for the system and in light of Assumption~\ref{assume:1}, the best linear unbiased estimate (BLUE) of $h$ from perspective of the malicious entity is given by the standard least-squares estimate~\cite[Ch. 4]{soderstrom1988system}. Let us introduce the vectors $y := [y_1 \,\ldots\, y_N]^\top$, $e:= [e_1 \,\ldots\, e_N]^\top$, and $w:= [w_1 \,\ldots\, w_N]^\top$. Assuming that the system is at rest prior to the data collection (i.e., $r_t=0$ for all $t\leq 0$) and defining the matrix
\begin{equation*}
R := 
\begin{bmatrix}
r_1 & 0   & 0 & \ldots & 0 \\ 
r_2 & r_1 & 0 & \ldots & 0 \\ 
\vdots & \vdots & \vdots &\ddots & \vdots \\
r_{n_h} & r_{n_h-1} & r_{n_h-2} &\ldots & r_1 \\
\vdots & \vdots & \vdots &\ddots & \vdots \\ 
r_{N} & r_{N-1}  &  \ldots &\ldots &  r_{N-n_h+1}  \end{bmatrix}\hspace{-.04in},
\end{equation*}
it is evident that $y = Rh + w + e.$
The least-squares estimate of $h$ is then given by
\begin{equation} \label{eq:LS_est}
\hat h = (R^\top R)^{-1}R^\top y.
\end{equation}
Note that this estimator is not the true BLUE, which would require the knowledge of the second order statistics of $w_t$. However, it is the best that the malicious entity can do without the knowledge that $w_t$ exists. This estimator is still unbiased because $\E \{ \hat h \}  = \E \{(R^\top R)^{-1}R^\top (Rh + w + e) \}  =  h + (R^\top R)^{-1}R^\top \E \{w + e \} = h$. 
Then, a measure of the accuracy of the estimation of the impulse response is the covariance matrix of $\hat h$~\cite[Ch. 4]{soderstrom1988system}, namely
\begin{equation}
P_h := \E \{(\hat h-h)(\hat h-h)^\top \}.
\end{equation}
The additional input $w_t$ determines the quality of the estimated system $\hat h$ by entering into the expression of the parameter covariance matrix  $P_h$. Intuitively, the higher the power of $w_t$, the higher $P_h$ (and thus the lower the identification accuracy). On the other hand, $w_t$ has an undesired effect on the output power. Therefore, the additive noise is designed to increase the total variance of $\hat h$ (expressed through the trace of $P_h$) while keeping low the contribution of $w_t$ to the variance of $y_t$. \rev{Let $\lambda_y := \E \left[y_t^2|r_t = 0,\,t\in \mathbb Z \right]$ be such contribution. Note that, if $r_t=0$, the output is driven only by the stationary noise processes $e_t$ and $w_t$ and so $\lambda_y$ is constant in $t$}. 

\begin{problem} \label{prob:det} For a given input $r$, find an appropriate additive noise $w_t$ to maximize the identification error $\Tr(P_h)$ while keeping the performance degradation small by guaranteeing $\lambda_y\leq \gamma_1$. 
\end{problem}

\update{In Problem~\ref{prob:det}, $\gamma_1$ is a pre-selected constant that reflects the maximum tolerable output variance, which is a measure of the performance degradation caused by the additive input and output noises. If $\gamma_1$ is very small, the optimal solution is add no noise. In this case, the closed-loop performance is far superior to protecting the model. However, if $\gamma_1$ is too large, the output of the system is drowned in noise and thus the system becomes practically useless.
}

Here, the additive noise is designed for a given sequence of inputs captured by $r$. This might not be generally feasible as, when dealing with causal systems, the additive noise should be designed and employed prior to receiving the entire sequence of inputs. This design methodology is however very useful to make the identification difficult for a given input, such as those in optimal experimental design in the model identification literature~\cite{rojas2007robust}. Alternatively, a distribution for the input signal can be considered. \rev{Furthermore, the length of the experiment $N$ that the malicious entity is collecting to identify the system is also unknown a priori, and shall be treated as a random quantity.} 

\begin{assumption} \label{assum:1} Let $N\in\mathbb{N}$ be a random number distributed according to $\mathbb{P}\{N=\ell\}=p(\ell)$ for some $p:\mathbb{N}\rightarrow[0,1]$ such that $\sum_{\ell\in\mathbb{N}}p(\ell)=1$. For a given $N$, assume that $r\in\mathbb{R}^{N}$ is distributed according to the conditional probability density function $p(\cdot|N)$ such that $\mathbb{P}\{r\in\mathcal{R}|N\}=\int_{r'\in\mathcal{R}}p(r'|N)\mathrm{d}r'$ for all Lebesgue-measurable sets $\mathcal{R}\subseteq\mathbb{R}^N$.
\end{assumption}

\rev{
\begin{remark} In general, the probability density function of the input signals might not be known in advance. In that case, an online or adaptive approach can be used to estimate the statistical properties of the input as more inputs are revealed over time and design (or update the design of) privacy-preserving filters based on the additional gathered information. The result of this paper can serve as a first step in that direction. This is because if rigorous treatment of the problem for known deterministic inputs or random inputs with known probability distributions is not well understood, the analysis of the online approach would not be possible (or straightforward to say the least).
\end{remark}
}

In this case, the identification error $P_h$ which is used as a measure of privacy should be replaced with $\mathbb{E}\{P_h\}$ with the expectation being taken over random variables $r$ and $N$. 
This allows us to generalize the problem of the interest as follows. 

\begin{problem} \label{prob:sto} For given distributions of random variables $N$ and $r$ following Assumption~\ref{assum:1}, find an appropriate additive noise $w_t$ to maximize the identification error $\Tr(\mathbb{E}\{P_h\})$ while keeping the performance degradation small by guaranteeing $\lambda_y\leq \gamma_1$. 
\end{problem}

In this paper, two families of additive noise are considered, namely, additive output noise and additive input noise. In the remainder of this section, these two families are described.

\begin{figure}
\centering
\begin{tabular}{c}
\begin{tikzpicture}[>=stealth]
\node[draw,rectangle,minimum width=2.0cm,minimum height=1.0cm] (H) at (0,0) {$H(q^{-1})$};
\node[draw,circle] (2) at (+3.2,0) {};
\node[draw,circle] (3) at (+2.0,0) {};
\draw[->,thick=black] (-3.5,0) -- (H);
\draw[->,thick=black] (H) -- (3);
\draw[->,thick=black] (3) -- (2);
\draw[->,thick=black] (+3.2,-1) -- (2);
\draw[->,thick=black] (2) -- (4.5,0);
\node[] at (+2.9,+0.2) {\tiny $+$};
\node[] at (+3.0,-0.3) {\tiny $+$};
\node[] at (+1.7,+0.2) {\tiny $+$};
\node[] at (+1.8,-0.3) {\tiny $+$};
\node[] at (-3.4,+0.3) {$r_t$};
\node[] at (+4.4,+0.3) {$y_t$};
\node[] at (+2.9,-0.9) {$e_t$};
\node[] at (-1.9,-1.0) {$v_t$};
\node[] at (+1.7,-0.9) {$w_t$};
\node[draw,rectangle,minimum width=2.0cm,minimum height=1.0cm] (O) at (0,-1.3) {$L(q^{-1})$};
\draw[->,thick=black] (O) -- (2,-1.3) -- (3);
\draw[->,thick=black] (-2,-1.3) -- (O);
\draw[dashed,-] (-2.8,-2) -- (-2.8,+0.7) -- (+3.8,+0.7) -- (+3.8,-2) -- (-2.8,-2);
\end{tikzpicture}
\\
(a)
\\
\begin{tikzpicture}[>=stealth]
\node[draw,rectangle,minimum width=2.0cm,minimum height=1.0cm] (H) at (0,0) {$H(q^{-1})$};
\node[draw,circle] (1) at (-2.0,0) {};
\node[draw,circle] (2) at (+3.2,0) {};
\draw[->,thick=black] (1) -- (H);
\draw[->,thick=black] (H) -- (2);
\draw[->,thick=black] (+3.2,-1) -- (2);
\draw[->,thick=black] (-3.5,0) -- (1);
\draw[->,thick=black] (2) -- (4.5,0);
\node[] at (+2.9,+0.2) {\tiny $+$};
\node[] at (+3.0,-0.3) {\tiny $+$};
\node[] at (-2.3,+0.2) {\tiny $+$};
\node[] at (-2.2,-0.3) {\tiny $+$};
\node[] at (-3.4,+0.3) {$r_t$};
\node[] at (+4.4,+0.3) {$y_t$};
\node[] at (+2.9,-0.9) {$e_t$};
\node[] at (-2.3,-0.9) {$x_t$};
\node[] at (+1.8,-1.1) {$v_t$};
\node[draw,rectangle,minimum width=2.0cm,minimum height=1.0cm] (I) at (0,-1.3) {$L(q^{-1})$};
\draw[->,thick=black] (I) -- (-2,-1.3) -- (1);
\draw[->,thick=black] (+2,-1.3) -- (I);
\draw[dashed,-] (-2.8,-2) -- (-2.8,+0.7) -- (+3.8,+0.7) -- (+3.8,-2) -- (-2.8,-2);
\end{tikzpicture}
\\
(b)
\end{tabular}
\caption{\label{fig:1} The schematic diagram of the closed-loop system with additive output (a) and input (b) noises. The eavesdropper only has access to the signals outside of the dashed box. }\vspace{-.2in}
\end{figure}
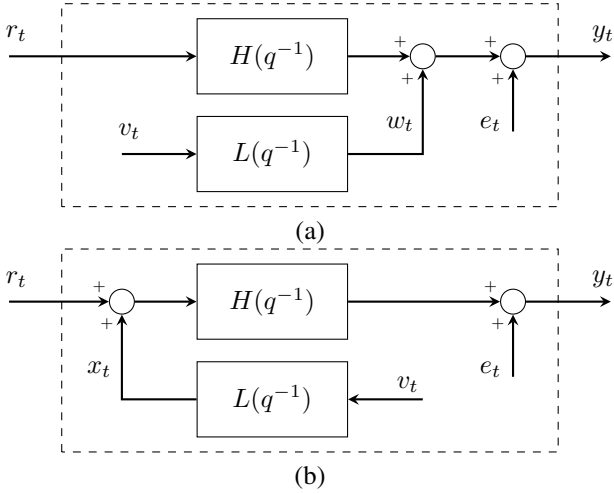

\subsubsection{Additive Output Noise}
\update{Figure~\ref{fig:1} (a) illustrates the schematic diagram of the closed-loop system with additive output noise.}
The additive noise $w_t$ is modelled by a zero-mean moving-average (MA) stochastic process of the form
\begin{equation}
w_t = L(q^{-1}) v_t,
\end{equation}
where $v_t$ is a sequence of i.i.d. zero-mean  noise (\update{which is not necessarily Gaussian}) of unit variance and $L(q^{-1}):=\sum_{k=0}^{n_l}l_kq^{-k}$ is a FIR filter of prescribed order $n_l$. \rev{Then, $w_t$ is a stationary process with zero-mean and well-defined autocovariance function \cite{lindquist2015linear}}. The additive noise $w:=[w_1\,\ldots\, w_N]^\top$ can be expressed as $w=Lv$, where $v := [v_{-n_l+2} \,\ldots v_0 \, v_1 \,\ldots\, v_N]^\top$ and
\begin{equation} \label{eq:def_L}
L:=  \begin{bmatrix} l_{n_l-1} & \ldots & l_0 & 0 & 0 & \ldots & 0 \\ 0 &  l_{n_l-1} & \ldots & l_0 & 0 & \ldots & 0\\ \vdots & \ddots  & \ddots & \ddots & \ddots & \ddots & \vdots \\ 
0 & \ldots & 0 &  l_{n_l-1} & \ldots & l_0 & 0 \\
 0 & 0 & \ldots & 0 & l_{n_l-1} & \ldots & l_0
 \end{bmatrix}.
\end{equation}
The identification error \update{covariance}, in this case, is
\begin{align} \label{eq:ls_variance:outputnoise}
P_h & = (R^\top R)^{-1}R^\top \Var [w+e] R (R^\top R)^{-1} \nonumber \\
	& = (R^\top R)^{-1}R^\top(L L^\top + \sigma^2I_N)R (R^\top R)^{-1}.
\end{align}
Further, the output variance can be determined by
\begin{align}
\lambda_y 
:= \E \{y_t^2|r_t = 0 \}
 = \E \{(w_t + e_t)^2 \}
 = \|l\|^2 + \sigma^2,
\end{align}
where $l=[l_0 \,\ldots\, l_{n_l-1}]^\top$.

\rev{
\begin{remark}
It should be noted that by increasing the order of the noise generation filter $n_l$, the performance can only be improved while maintaining the same privacy guarantee. This is because the optimal solution from the lower order is always feasible in the optimization problem relating to the higher order noise filters. The order of the system is thus only dictated by the available resources   for preserving the privacy of the model.
\end{remark}
}

\subsubsection{Additive Input Noise}
\update{Figure~\ref{fig:1} (b) shows the schematic diagram of the closed-loop system with additive input noise.}
In this case, the additive input noise is denoted by $x_t$ and is modeled by a zero-mean MA stochastic process of the form
\begin{equation}
x_t = L(q^{-1}) v_t,
\end{equation}
where, similarly, $v_t$ is a sequence of i.i.d. zero-mean noise of unit variance and $L(q^{-1})$ is a FIR filter of prescribed order $n_l$ determining the autocorrelation of $x_t$. Then, the new system is described by 
\begin{align}
y_t &= H(q^{-1})(r_t + x_t) + e_t \nonumber \\
&= H(q^{-1})(r_t + L(q^{-1}) v_t) + e_t .
\end{align}
The additive noise $w_t$, in this case, is the contribution of $x_t$ to the output, i.e., $w_t=H(q^{-1})L(q^{-1}) v_t$. Define
\begin{equation}
F(q^{-1}) := H(q^{-1})L(q^{-1}),
\end{equation}
which can be expressed as
\begin{equation}
F(q^{-1}) = \sum_{k=0}^{n_f-1} f_k q^{-k}, \; n_f\hspace{-.03in} =\hspace{-.03in} n_h\hspace{-.03in}+\hspace{-.03in}n_l\hspace{-.03in}-\hspace{-.03in}1.
\end{equation}
Note that $x\hspace{-.03in}:=\hspace{-.03in}[x_1\ldots x_N]^\top$ can be expressed as $x\hspace{-.02in}=\hspace{-.02in}Fv$ with $v := [v_{-n_f+2}\ldots v_0 v_1 \,\ldots v_N]^\top$ and $F$ is defined similarly to $L$ in \eqref{eq:def_L}.
The identification error \update{covariance} becomes
\begin{align} \label{eq:ls_variance}
P_h & = (R^\top R)^{-1}R^\top \Var [w+e] R (R^\top R)^{-1} \nonumber \\
	& = (R^\top R)^{-1}R^\top(F F^\top + \sigma^2I_N)R (R^\top R)^{-1}.
\end{align}
Finally, it can be shown that $\lambda_y = \|f\|^2 + \sigma^2$, where $f=[f_0 \,\ldots\, f_{n_f-1}]^\top$.

\subsection{Deterministic Input} \label{sec:determ}
This part is dedicated to solving Problem~\ref{prob:det}. The results are first presented for the output noise case.
\subsubsection{Additive Output Noise} 
For additive output noise, Problem~\ref{prob:det} can be rewritten as
\begin{subequations} \label{eq:criterion_1:output}
\begin{align} 
\argmax_{l \in \mathbb{R}^{n_l}} & \;\Tr(P_h),  \\
 \mathrm{s.t.} \hspace{.45in} & \;\lambda_y \leq \gamma_1,
\end{align}
\end{subequations}
where $\gamma_1$ denotes the maximum tolerated output variance. Define the performance degradation ratio
\begin{align*}
\rho
:=&\frac{\E \{y_t^2|r_t = 0 \}}{\E \{y_t^2|r_t = 0,w_t=0 \}}
=\frac{\lambda_y}{\sigma^2}.
\end{align*}
If the goal of the designer is to keep the performance degradation ratio below $\epsilon$, the constant $\gamma_1$ can be selected to be smaller than $\sigma^2 \epsilon$. 
\giulio{The following lemma is instrumental to obtain an analytic solution of \eqref{eq:criterion_1:output}.
\begin{lemma} \label{th:dummy_lemma}
Let  
\begin{subequations}\label{eq:E_c_def}
\begin{align} 
E &:=R (R^\top R)^{-1} (R^\top R)^{-1}R^\top \,, \\
c &:=\Tr(\sigma^2(R^\top R)^{-1}) \,,
\end{align}
\end{subequations}
and denote by $Q_l$ a selection matrix such that $\vect(L)=Q_ll$, where $\vect(L)$ is a vector composed of all the columns of the matrix $L$. Then, for the additive noise model, $\Tr(P_h) =  l^\top Q_l^\top ( I_{N+n_l-1} \otimes E )  Q_l l+c $.
\end{lemma}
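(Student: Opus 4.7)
The plan is to reduce the claim to a standard Kronecker-product identity for quadratic forms. Starting from the expression for $P_h$ in \eqref{eq:ls_variance:outputnoise}, I would first split the two contributions coming from $LL^\top$ and $\sigma^2 I_N$. The $\sigma^2$ term simplifies because $(R^\top R)^{-1}R^\top R(R^\top R)^{-1}=(R^\top R)^{-1}$, so it contributes exactly $\Tr(\sigma^2(R^\top R)^{-1})=c$ to $\Tr(P_h)$. This disposes of the additive constant in the claimed formula, leaving only the quadratic part in $l$ to deal with.

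Next, for the remaining term, I would use the cyclic property of the trace to rewrite
\begin{equation*}
\Tr\bigl((R^\top R)^{-1}R^\top LL^\top R(R^\top R)^{-1}\bigr)=\Tr(L^\top E L),
\end{equation*}
with $E$ as defined in \eqref{eq:E_c_def}. The key observation is that $\Tr(L^\top E L)$ is nothing but $\sum_{i} L_i^\top E L_i$, where $L_i$ denotes the $i$-th column of $L$. In other words, it is a block-diagonal quadratic form in the stacked columns of $L$.

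Then I would invoke the standard identity $\Tr(L^\top E L)=\vect(L)^\top (I_{N+n_l-1}\otimes E)\vect(L)$, which is exactly the Kronecker-product encoding of the sum $\sum_i L_i^\top E L_i$ (here $L$ is $N\times(N+n_l-1)$, hence the dimension of the identity factor). Substituting $\vect(L)=Q_l l$ from the definition of the selection matrix $Q_l$ yields $\Tr(L^\top E L)=l^\top Q_l^\top(I_{N+n_l-1}\otimes E)Q_l l$, and combining with the $c$ term from the first paragraph completes the proof.

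There is essentially no obstacle here: the only non-mechanical step is recognizing that the Toeplitz structure of $L$ enters the trace only through the linear relation $\vect(L)=Q_l l$, which is precisely what the selection matrix $Q_l$ is introduced to encode. The rest is cyclic trace manipulation and a textbook Kronecker identity.
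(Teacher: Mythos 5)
Your proof is correct and follows essentially the same route as the paper: separate the $\sigma^2 I_N$ contribution into the constant $c$, reduce the remaining term to $\Tr(L^\top E L)$ by cyclicity of the trace, convert to $\vect(L)^\top(I_{N+n_l-1}\otimes E)\vect(L)$ via the standard Kronecker identity, and substitute $\vect(L)=Q_l l$. The only cosmetic difference is that you justify the Kronecker step by the column-sum decomposition $\sum_i L_i^\top E L_i$ whereas the paper cites it from Horn and Johnson; both are the same identity.
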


\begin{proof} See Appendix~\ref{proof:th:dummy_lemma},
\end{proof}

}
Defining $M:=  Q_l^\top ( I_{N+n_l-1} \otimes E)  Q_l$ and noting that the term $c$ is independent of $l$ (and thus can be discarded from the optimization problem), we transform \eqref{eq:criterion_1:output} into
\begin{subequations} \label{eq:optim:eigen:output}
\begin{align} 
\argmax_{l \in \mathbb{R}^{n_l}} & \;l^\top Ml  \\
 \mathrm{s.t.} \hspace{.45in} & \;l^\top l \leq \gamma_1-\sigma^2.
\end{align}
\end{subequations}
The following result can be immediately proved. 

\begin{theorem} \label{tho:eigenvector} The solution of~\eqref{eq:optim:eigen:output} is $l^*=\sqrt{\gamma_1-\sigma^2} \eta^*$, where $\eta^*$ is the normalized eigenvector corresponding to the largest eigenvalue of $M$.
\end{theorem}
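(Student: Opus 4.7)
The plan is to recognize \eqref{eq:optim:eigen:output} as a standard maximization of a (positive semi-definite) quadratic form over a Euclidean ball, which is solved by the Rayleigh quotient characterization of the largest eigenvalue.

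First, I would verify that $M = Q_l^\top (I_{N+n_l-1} \otimes E) Q_l$ is positive semi-definite. This follows because $E = R(R^\top R)^{-1}(R^\top R)^{-1}R^\top$ is of the form $AA^\top$ with $A = R(R^\top R)^{-1}$, hence PSD; the Kronecker product of two PSD matrices is PSD; and the congruence transformation by $Q_l$ preserves positive semi-definiteness. Consequently $l^\top M l \geq 0$ for all $l$, and in particular the objective is non-negative.

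Next, I would argue that the norm constraint is active at every optimum, provided $\gamma_1 > \sigma^2$ (otherwise the only feasible choice is $l=0$). If $l$ is feasible with $\|l\|^2 < \gamma_1 - \sigma^2$ and $l \neq 0$, then the rescaled vector $\tilde{l} := \sqrt{(\gamma_1-\sigma^2)/\|l\|^2}\, l$ satisfies $\|\tilde{l}\|^2 = \gamma_1-\sigma^2$ and, by homogeneity of degree two, yields a strictly larger objective value unless $l^\top M l = 0$, in which case any boundary point is at least as good. Hence we may restrict to the sphere $\|l\|^2 = \gamma_1-\sigma^2$.

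Writing $l = \sqrt{\gamma_1-\sigma^2}\,\eta$ with $\|\eta\|=1$, the problem becomes $\max_{\|\eta\|=1}(\gamma_1-\sigma^2)\,\eta^\top M \eta$, and the Rayleigh–Ritz theorem immediately gives that the maximum value is $(\gamma_1-\sigma^2)\lambda_{\max}(M)$, attained at $\eta = \eta^*$, a unit eigenvector of $M$ associated with $\lambda_{\max}(M)$. Unwinding the substitution yields $l^* = \sqrt{\gamma_1-\sigma^2}\,\eta^*$, as claimed.

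There is no real obstacle here; the only subtlety worth noting is that $M$ is PSD (so the constraint is active) and that $\eta^*$ need not be unique if the top eigenvalue of $M$ has multiplicity greater than one, but any such unit eigenvector yields an optimal $l^*$.
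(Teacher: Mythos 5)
Your proposal is correct and follows essentially the same route as the paper: the change of variable $l=\sqrt{\gamma_1-\sigma^2}\,\eta$ reduces the problem to maximizing the Rayleigh quotient of the positive semi-definite matrix $M$ over the unit ball, which the Courant--Fischer--Weyl (Rayleigh--Ritz) principle resolves. Your additional remarks on positive semi-definiteness of $M$, activity of the constraint, and possible non-uniqueness of $\eta^*$ are details the paper leaves implicit but do not change the argument.
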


\begin{proof} 
The change of variable $\eta=l/\sqrt{\gamma_1-\sigma^2}$ transforms the optimization problem in~\eqref{eq:optim:eigen:output} to
\begin{align*} 
\eta^*\in\argmax_{\eta \in \mathbb{R}^{n_l}} & \;\eta^\top M\eta  \\
 \mathrm{s.t.} \hspace{.45in} & \;\eta^\top \eta \leq 1.
\end{align*}
Note that $M\geq 0$ has at least one positive eigenvalue (as otherwise $M=0$). Therefore, Courant--Fischer--Weyl min-max principle~\cite[p.\,58]{bhatia1997matrix} shows $\eta^*$ is the normalized eigenvector corresponding to the largest eigenvalue of $M$.\end{proof}

\rev{It can be seen that the quality of the model identification drops linearly with increasing $\gamma_1$. At the same time, the performance degradation ratio increases linearly with $\gamma_1$. This capture the trade-off between these two objectives. Note that, for instance, simply increasing the noise variance $\sigma^2$ to the upper bound $\gamma_1$ would determine a linear increase of the identification error, as $P_h$ is proportional to $\sigma^2$. However, this strategy is non-optimal, and Theorem \ref{tho:eigenvector} shows how to obtain the best trade-off between performance degradation and model quality degradation, namely how to get highest linear gain. A comparison between these two strategies is given in Section \ref{sec:example}. }

\rev{If, for a given application, the linear dependency between model quality degradation and system performance degradation is not suitable, one can use the following alternative formulation of the problem:}
\begin{align} \label{eq:criterion_3:output}
\argmin_{l \in \mathbb{R}^{n_l}} \left(\Tr(P_h)\right)^{-1} + \gamma_2 \lambda_y \,,
\end{align}
where $\gamma_2$ determines weight on the performance versus the privacy. This formulation is useful when the constraint on the performance is not hard (i.e., the degradation does not need to be maintained under a given level but large output variations are  not pleasant). This problem is rewritten as
\begin{align} \label{eq:criterion_4:output}
\argmin_{l \in \mathbb{R}^{n_l}} (l^\top M l + c)^{-1} + \gamma_2 \|l\|^2 \,,
\end{align}
where $c$ is defined in \eqref{eq:E_c_def}.

\begin{theorem} \label{tho:weighted} Let $\lambda_1 \geq  \lambda_2 \geq \ldots \geq \lambda_{n_l} \geq 0$ be the eigenvalues of $M$ and $v_1,v_2,\ldots,v_{n_l}$ denote the corresponding eigenvectors. The solution of~\eqref{eq:criterion_4:output} is 
\begin{align*}
l^*=
\begin{cases}
0, & \lambda_{1} \leq \gamma_2 c^2,\\
\sqrt{1/\sqrt{\gamma\lambda_1} - c/\lambda_1}v_{1}, & \mbox{otherwise}.
\end{cases}
\end{align*}
\end{theorem}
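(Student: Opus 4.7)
The plan is to reduce the unconstrained minimization to a one-dimensional problem by diagonalizing $M$, then optimize a scalar function and classify the cases based on how the critical point relates to the boundary $l=0$.

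First I would expand $l$ in the orthonormal eigenbasis of the symmetric PSD matrix $M$, writing $l=\sum_{i=1}^{n_l}\alpha_i v_i$. Then $l^\top M l=\sum_i\lambda_i\alpha_i^2$ and $\|l\|^2=\sum_i\alpha_i^2$. Since both terms of the objective are increasing in $l^\top M l+c$ (the first decreasing, but appearing with a negative sign via the inverse) and in $\|l\|^2$ respectively, and since for any fixed norm $r=\|l\|$ the quadratic form $l^\top M l$ is maximized by aligning $l$ with $v_1$, the minimization decouples: the optimal $l$ must be of the form $l=r\,v_1$ for some $r\geq 0$. This turns the problem into
\begin{equation*}
\min_{r\geq 0}\;g(r),\qquad g(r):=\frac{1}{\lambda_1 r^2+c}+\gamma_2 r^2.
\end{equation*}

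Next I would compute $g'(r)=2r\bigl[\gamma_2-\lambda_1/(\lambda_1 r^2+c)^2\bigr]$. The stationarity condition away from $r=0$ yields $(\lambda_1 r^2+c)^2=\lambda_1/\gamma_2$, i.e.\ $\lambda_1 r^2+c=\sqrt{\lambda_1/\gamma_2}$, so
\begin{equation*}
r_\star^2=\frac{1}{\sqrt{\gamma_2\lambda_1}}-\frac{c}{\lambda_1}.
\end{equation*}
This interior critical point exists (i.e.\ $r_\star^2\geq 0$) exactly when $\sqrt{\lambda_1/\gamma_2}\geq c$, equivalently $\lambda_1\geq\gamma_2 c^2$. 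When this fails, the only stationary point on $[0,\infty)$ is $r=0$, and since $g'(r)=2r[\gamma_2-\lambda_1/(\lambda_1 r^2+c)^2]>0$ for all $r>0$ (because $\lambda_1<\gamma_2 c^2\leq\gamma_2(\lambda_1 r^2+c)^2$), $g$ is strictly increasing and the minimizer is $l^*=0$, matching the first case of the theorem.

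Finally, in the regime $\lambda_1>\gamma_2 c^2$, I would verify that $r_\star$ is a minimizer by comparing $g(r_\star)$ with the boundary value $g(0)=1/c$. Substituting gives $g(r_\star)=2\sqrt{\gamma_2/\lambda_1}-\gamma_2 c/\lambda_1$, so $g(0)-g(r_\star)=\bigl(\sqrt{\lambda_1}-c\sqrt{\gamma_2}\bigr)^2/(c\lambda_1)\geq 0$, with strict inequality in the open regime, confirming that $l^*=r_\star v_1$ is optimal (the sign of the eigenvector being irrelevant since the objective depends on $l$ only through $l^\top Ml$ and $\|l\|^2$). This yields the stated expression $l^*=\sqrt{1/\sqrt{\gamma_2\lambda_1}-c/\lambda_1}\,v_1$. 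The main obstacle is not technical but rather organizational: making sure the reduction to the top eigenvector is justified cleanly (requiring a short monotonicity argument on $(l^\top Ml+c)^{-1}$ for fixed $\|l\|$) and that the threshold $\lambda_1=\gamma_2 c^2$ is correctly identified as the transition between the two regimes.
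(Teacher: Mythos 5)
Your proof is correct, but it takes a genuinely different and in fact cleaner route than the paper's. The paper works directly with the multivariate first-order condition $\left(M - \gamma_2 (l^\top M l + c)^2 I_{n_l}\right) l = 0$, enumerates all stationary points (the origin plus one scaled eigenvector per eigenvalue $\lambda_i$), and then computes the Hessian at every candidate, diagonalizing it in the eigenbasis of $M$ to show that only the $\lambda_1$-candidate (or the origin, in the other regime) is a local minimum; it then needs a separate argument for global optimality and an additional case analysis when the top eigenvalue is repeated. You instead exploit the structure of the objective: for fixed $\|l\|=r$ the term $(l^\top M l + c)^{-1}$ is minimized by maximizing the Rayleigh quotient, which pins $l$ to the top eigenspace, and the problem collapses to the scalar function $g(r)=(\lambda_1 r^2+c)^{-1}+\gamma_2 r^2$. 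This buys you three things: no Hessian computation at all (second-order classification is replaced by the explicit comparison $g(0)-g(r_\star)=(\sqrt{\lambda_1}-c\sqrt{\gamma_2})^2/(c\lambda_1)\geq 0$), an immediate global-optimality argument (since $g$ is coercive and $r_\star$ is its only interior critical point, the minimum is at $r=0$ or $r=r_\star$), and automatic handling of a repeated top eigenvalue, which the paper must treat as a separate case at the end of its proof. Your computations check out: $g'(r)=2r\bigl[\gamma_2-\lambda_1/(\lambda_1 r^2+c)^2\bigr]$, the threshold $\lambda_1=\gamma_2 c^2$ is where $r_\star$ hits zero, and strict monotonicity of $g$ when $\lambda_1<\gamma_2 c^2$ gives the first case. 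The only point worth stating explicitly is that $c=\Tr(\sigma^2(R^\top R)^{-1})>0$, so the denominator $l^\top M l+c$ never vanishes and the monotonicity argument for the reduction to $v_1$ is valid; also note the theorem's $\gamma$ is a typo for $\gamma_2$, which you silently and correctly fixed.
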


\begin{proof}
See Appendix~\ref{proof:tho:weighted}.
\end{proof}


%
\subsubsection{Additive Input Noise}
Similarly, Problem~\ref{prob:det} can be expressed  as
\begin{subequations} \label{eq:criterion_1:input}
\begin{align} 
\argmax_{l \in \mathbb{R}^{n_l}} & \;\Tr(P_h),  \\
 \mathrm{s.t.} \hspace{.45in} & \;\lambda_y \leq \gamma_1.
\end{align}
\end{subequations}
Using the same line of reasoning as in Lemma~\ref{th:dummy_lemma}, we introduce the following instrumental result.
\begin{lemma}
Let $Q_f$ be a selection matrix such that $\vect(F)=Q_ff$. Then, for the additive input noise model,
\begin{equation} \label{eqn:error_to_f}
\Tr(P_h) = f^\top Q_f^\top ( I_{N+n_f-1} \otimes E )  Q_f f+c \,,
\end{equation}
where $E$ and $c$ are defined in \eqref{eq:E_c_def}.
\end{lemma}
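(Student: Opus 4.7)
The plan is to mirror the proof of Lemma~\ref{th:dummy_lemma} with $F$ playing the role of $L$ and $n_f$ playing the role of $n_l$. Indeed, comparing equations \eqref{eq:ls_variance:outputnoise} and \eqref{eq:ls_variance}, the expressions for $P_h$ in the output noise and input noise cases are structurally identical after the substitution $L\mapsto F$; so the whole argument can be repeated verbatim with essentially no new technical content.

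Concretely, I would first take the trace of \eqref{eq:ls_variance} and split it into the two contributions
\begin{equation*}
\Tr(P_h)=\Tr\bigl((R^\top R)^{-1}R^\top FF^\top R (R^\top R)^{-1}\bigr)+\sigma^{2}\Tr\bigl((R^\top R)^{-1}\bigr).
\end{equation*}
The second term is exactly the constant $c$ defined in \eqref{eq:E_c_def}. For the first term, I would use the cyclic property of the trace to rewrite it as $\Tr(F^\top E F)$, where $E$ is the matrix introduced in \eqref{eq:E_c_def}.

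Next, I would invoke the standard vectorization identity $\Tr(F^\top E F)=\vect(F)^\top (I_{N+n_f-1}\otimes E)\vect(F)$, where the size of the identity factor is dictated by the number of columns of $F$; since $F$ is constructed analogously to $L$ in \eqref{eq:def_L} but with the higher order $n_f=n_h+n_l-1$, this number of columns is $N+n_f-1$, matching the statement. Substituting $\vect(F)=Q_f f$ then yields
\begin{equation*}
\Tr(F^\top E F)=f^\top Q_f^\top (I_{N+n_f-1}\otimes E)\, Q_f f,
\end{equation*}
and combining this with the constant $c$ gives \eqref{eqn:error_to_f}.

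There is no real obstacle; the only point requiring care is verifying that the selection matrix $Q_f$ is well-defined, i.e.\ that the Toeplitz-like structure of $F$ implies that $\vect(F)$ depends linearly on $f$ in exactly the same way that $\vect(L)$ depends on $l$ in Lemma~\ref{th:dummy_lemma}. This is immediate from the definition of $F$ as the input noise convolution matrix built from the coefficients of $F(q^{-1})=H(q^{-1})L(q^{-1})$, so the construction of $Q_f$ parallels that of $Q_l$ verbatim.
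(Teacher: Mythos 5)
Your proposal is correct and follows essentially the same route as the paper, which itself simply notes that the proof mirrors that of Lemma~\ref{th:dummy_lemma}: split off the constant $c$, use cyclicity of the trace to get $\Tr(F^\top E F)$, apply the vectorization identity with the Kronecker product, and substitute $\vect(F)=Q_f f$. No issues.
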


\begin{proof} The proof follows the same line of reasoning as in Lemma~\ref{th:dummy_lemma}.\end{proof}

Now, note that the coefficients of the filter $L(q^{-1})$ and filter $F(q^{-1})=H(q^{-1})L(q^{-1})$ are related according to
\begin{equation} \label{eqn:f_h_l}
f = Hl,
\end{equation}
where $H \in \mathbb{R}^{n_f \times n_l}$ is a Toeplitz matrix formed by the coefficients of $h$. Substituting~\eqref{eqn:f_h_l} in~\eqref{eqn:error_to_f} gives
$\Tr(P_h) = l^\top H^\top Q_f^\top ( I_{N+n_f-1} \otimes E )  Q_f H l+c.$
Therefore, the optimization problem in~\eqref{eq:criterion_1:input} can be transformed into 
\begin{subequations} \label{eq:optim:eigen:input}
\begin{align} 
\argmax_{l \in \mathbb{R}^{n_l}} & \;l^\top M' l,  \\
 \mathrm{s.t.} \hspace{.45in} & \;l^\top H^\top H l \leq \gamma_1-\sigma^2,
\end{align}
\end{subequations}
where $M'=H^\top Q_f^\top ( I_{N+n_f-1} \otimes E )  Q_f H $. The following result can be immediately proved. 

\begin{theorem} \label{tho:eigenvector:input} Assume $H^\top H>0$. The solution of~\eqref{eq:optim:eigen:input} is $l^*=\sqrt{\gamma_1-\sigma^2}(H^\top H)^{1/2} \eta^*$, where $\eta^*$ is the normalized eigenvector corresponding to the largest eigenvalue of $(H^\top H)^{-1/2}M'(H^\top H)^{-1/2}$.
\end{theorem}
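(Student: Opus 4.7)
The plan is to reduce this to the proof of Theorem~\ref{tho:eigenvector} by a linear change of variables that straightens the elliptical feasible region into a Euclidean ball, so that the Courant--Fischer--Weyl min-max principle can again be invoked. Since $H^\top H > 0$ by assumption, its symmetric positive-definite square root $(H^\top H)^{1/2}$ exists and is invertible. I would introduce the new variable
\[
\eta \;:=\; \frac{1}{\sqrt{\gamma_1-\sigma^2}}\,(H^\top H)^{1/2} l,
\]
so that $l$ is recovered from $\eta$ via the inverse of this transformation.

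Under this substitution the constraint $l^\top H^\top H\, l \leq \gamma_1-\sigma^2$ becomes simply $\eta^\top \eta \leq 1$, and the objective $l^\top M' l$ transforms, up to the positive scalar $\gamma_1-\sigma^2$, into $\eta^\top \tilde M\, \eta$, where $\tilde M := (H^\top H)^{-1/2} M' (H^\top H)^{-1/2}$. The matrix $\tilde M$ is symmetric positive semidefinite, as it is a congruence of $H^\top Q_f^\top(I_{N+n_f-1}\otimes E) Q_f H$, and $E = R(R^\top R)^{-2}R^\top \succeq 0$ ensures the Kronecker factor is psd; if $\tilde M \equiv 0$ every $l$ is optimal and the claim is trivial, so I may assume $\tilde M$ has at least one strictly positive eigenvalue.

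The reduced problem is then to maximize $\eta^\top \tilde M \eta$ on the unit ball, which by Courant--Fischer--Weyl is solved by taking $\eta^*$ to be a unit-norm eigenvector of $\tilde M$ associated with its largest eigenvalue. The constraint is automatically active at the optimum because the quadratic form is monotone along rays from the origin, so rescaling preserves the direction. Undoing the change of variables recovers the announced closed form for $l^*$ up to the appropriate $(H^\top H)^{\pm 1/2}$ factor.

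There is no substantive obstacle; the argument is essentially a verbatim copy of Theorem~\ref{tho:eigenvector} after the affine reparametrization. The only point requiring care is the bookkeeping of which square-root factor multiplies $\eta^*$ in the inverse substitution, which is dictated by the constraint $l^\top H^\top H l \leq \gamma_1-\sigma^2$ and by the requirement that the active constraint be mapped to $\|\eta\|=1$.
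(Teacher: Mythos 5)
Your proposal follows the same route as the paper's proof: a linear change of variables that maps the ellipsoidal constraint onto the unit ball, followed by the Courant--Fischer--Weyl argument of Theorem~\ref{tho:eigenvector}. One point, however: you should not leave the final factor as ``the appropriate $(H^\top H)^{\pm 1/2}$''---commit to it, because this is exactly where the printed statement goes wrong. Your substitution $\eta=(H^\top H)^{1/2}l/\sqrt{\gamma_1-\sigma^2}$ is the internally consistent one: it sends $l^\top H^\top H\,l\leq\gamma_1-\sigma^2$ to $\eta^\top\eta\leq 1$ and the objective to $(\gamma_1-\sigma^2)\,\eta^\top(H^\top H)^{-1/2}M'(H^\top H)^{-1/2}\eta$, and inverting it yields $l^*=\sqrt{\gamma_1-\sigma^2}\,(H^\top H)^{-1/2}\eta^*$, with exponent $-1/2$ rather than the $+1/2$ appearing in the theorem statement. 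The paper's own proof instead introduces $\eta=(H^\top H)^{-1/2}l/\sqrt{\gamma_1-\sigma^2}$, which is consistent with the stated formula for $l^*$ but would turn the constraint into $\eta^\top(H^\top H)^2\eta\leq 1$ and the objective into a form involving $(H^\top H)^{1/2}M'(H^\top H)^{1/2}$, contradicting the eigenvector matrix named in the statement; so there is a sign typo in the paper either way, and your bookkeeping (once you commit to the $-1/2$) is the correct resolution. Everything else---positive semidefiniteness of the congruence, the degenerate case $\tilde M=0$, and the activity of the constraint at the optimum---is handled correctly and mirrors the paper.
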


\begin{proof} Introducing $\eta=(H^\top H)^{-1/2}l/\sqrt{\gamma_1-\sigma^2}$ transforms the optimization problem in~\eqref{eq:optim:eigen:output} to
\begin{align*} 
\eta^*\in\argmax_{\eta \in \mathbb{R}^{n_l}} & \;\eta^\top (H^\top H)^{-1/2}M'(H^\top H)^{-1/2}\eta  \\
 \mathrm{s.t.} \hspace{.45in} & \;\eta^\top \eta \leq 1.
\end{align*}
The rest of the proof follows the same line of reasoning as in the proof of Theorem~\ref{tho:eigenvector}. 
\end{proof}

The condition $H^\top H>0$ is satisfied so long as $H$ has full column rank. This is guaranteed if $h_{n_h}\neq 0$, i.e., no fewer than $n_h$ parameters are required for describing filter $H(q^{-1})$.

\begin{remark} The derivations of this section hold for arbitrary noise distributions as only the first and the second moments of the noise were considered. \update{However, the choice of the Gaussian noise is highly preferred as it makes the integration of the closed-loop system with other control loops much easier. This is an important feature as, most often, off-the-shelf systems are interconnected to achieve complex tasks. Other noise distributions do not lend themselves that easily to integration as they might violate assumptions in the design of the control loops (e.g., Laplace noise results in an increased false alarm rate for fault detection schemes).}
\end{remark}

\rev{
\subsection{Extension to regularized least-squares} \label{sec:RLS}
We now modify the proposed privacy-preserving technique to cope with regularized least-squares estimators. The cost function associated with this type of estimators is 
\begin{equation} \label{eq:cost_RLS}
J_{\mathrm{RLS}}(h) = \|y - Rh\|_2^2 + \eta\|h\|_{K^{-1}}^2 \,,
\end{equation}
where $K$ is a positive semidefinite matrix (usually called a kernel) inducing desired properties in the estimates $\hat h$, see \cite{pillonetto2014kernel} for details on regularized methods for system identification. The solution to 
\eqref{eq:cost_RLS} is 
\begin{equation} \label{eq:sol_RLS}
\hat h = (R^\top R + \eta K^{-1})^{-1} R^\top y = Cy \,,
\end{equation}
with obvious defintion of $C$. This solution is biased. Further, it can be verified (see, e.g., \cite{pillonetto2014kernel}) that the mean square error (MSE) of the estimate is given by
\begin{align} \label{eq:MSE_RLS}
\mathrm{MSE}  = &\E\{(h - \hat h)(h - \hat h)^\top\} \\
=& (I_{n_h} - CR)h h^\top(I_{n_h} - CR)^\top \nonumber\\
&+ CLL^\top C^\top + \sigma^2CC^\top  \nonumber,
\end{align}
the first term on the right hand side corresponding to the bias induced by the regularization penalty. Then, the results of Theorems \ref{tho:eigenvector} and \ref{tho:weighted} hold by redefining
\begin{subequations}\label{eq:E_c_new_def}
\begin{align} 
E &:=C^\top C \,, \\
c &:=\Tr((I_{n_h} - CR)h h^\top(I_{n_h} - CR)^\top + \sigma^2CC^\top) \,,
\end{align}
\end{subequations}
and, accordingly, updating the definition of matrix $M$. Note that the identification performance depends on the parameter $\eta$, regulating the bias-variance trade off, and on the kernel matrix $K$. These are user choices, which are not accessible to privacy-preserving device. One possible way to circumvent this issue is to consider the best possible choice of kernel, which is given by $K = h h^\top$ \cite{pillonetto2014kernel}.
}

\subsection{Random Inputs} \label{sec:random}
The problem of designing an additive output noise is only considered in this section. The results can be easily extended to the design of input noises following the same line of reasoning. Problem~\ref{prob:sto} can be cast as
\begin{subequations} \label{eqn:optim:sto}
\begin{align} 
\argmax_{l \in \mathbb{R}^{n_l}} & \;\Tr(\mathbb{E}\{P_h\})  \\
 \mathrm{s.t.} \hspace{.45in} & \;\lambda_y \leq \gamma_1.
\end{align}
\end{subequations}
Note that $\Tr(P_h)  =\mathbb{E}\{c(r,N)\}+ l^\top \mathbb{E}\{Q_l(N)^\top ( I_{N+n_f-1} \otimes E(r,N) )   Q_l(N) \}l$.
Although having the same definition, $Q_l(N)$, $E(r,N)$, $c(r,N)$ are used instead of $Q_l$, $E$, and $c$ to emphasize they are functions of random variables $N$ and $r$. Define $M'':=\mathbb{E}\{Q_l(N)^\top ( I_{N+n_f-1} \otimes E(r,N) )  Q_l(N) \}$. 
The optimization problem in~\eqref{eqn:optim:sto} can be rewritten as
\begin{subequations} \label{eq:optim:eigen:1}
\begin{align} 
\argmax_{l \in \mathbb{R}^{n_l}} & \;l^\top M'' l,  \\
 \mathrm{s.t.} \hspace{.45in} & \;l^\top l \leq \gamma_1-\sigma^2.
\end{align}
\end{subequations}

\begin{theorem}\label{tho:random} The solution of~\eqref{eq:optim:eigen:1} is $l^*=\sqrt{\gamma_1-\sigma^2} \eta^*$, where $\eta^*$ is the normalized eigenvector corresponding to the largest eigenvalue of $M''$.
\end{theorem}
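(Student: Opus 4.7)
The plan is to reduce this statement to the framework of Theorem~\ref{tho:eigenvector}, since the optimization problem in \eqref{eq:optim:eigen:1} has exactly the same geometric form as in \eqref{eq:optim:eigen:output}: maximize a quadratic form $l^\top M'' l$ over a Euclidean ball of radius $\sqrt{\gamma_1 - \sigma^2}$. The only nontrivial ingredient that must be re-established is that $M''$ inherits positive semidefiniteness from the per-realization matrices being averaged.

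First, I would argue that for each fixed realization of the pair $(r, N)$, the matrix $E(r,N) = R(R^\top R)^{-1}(R^\top R)^{-1}R^\top$ is of the form $A A^\top$ with $A = R(R^\top R)^{-1}$, hence positive semidefinite. Consequently the Kronecker product $I_{N+n_l-1} \otimes E(r,N)$ is positive semidefinite, and the congruence $Q_l(N)^\top \bigl(I_{N+n_l-1} \otimes E(r,N)\bigr) Q_l(N)$ remains positive semidefinite. Taking expectation over $(r,N)$ preserves this property (the cone of PSD matrices is closed under nonnegative linear combinations and, more generally, under expectation), so $M'' \succeq 0$.

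Next, I would perform the normalization $\eta := l / \sqrt{\gamma_1 - \sigma^2}$, which rewrites the problem as
\begin{align*}
\eta^* \in \argmax_{\eta \in \mathbb{R}^{n_l}} & \;\eta^\top M'' \eta \\
\mathrm{s.t.} \hspace{.45in} & \;\eta^\top \eta \leq 1.
\end{align*}
If $M'' = 0$ the claim is vacuous (every feasible $l$ is optimal, including the stated one). Otherwise $M''$ has at least one strictly positive eigenvalue, and the Courant--Fischer--Weyl min-max principle \cite[p.\,58]{bhatia1997matrix} implies that the maximum is attained at the unit eigenvector $\eta^*$ associated with the largest eigenvalue of $M''$, with the ball constraint active. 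Undoing the scaling gives $l^* = \sqrt{\gamma_1 - \sigma^2}\, \eta^*$.

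I do not expect any real obstacles here: the argument is a carbon copy of the proof of Theorem~\ref{tho:eigenvector}, and the only subtlety—verifying that averaging over the joint distribution of $(r,N)$ preserves positive semidefiniteness—is immediate from the pointwise PSD property of the integrand. A minor technical point worth noting (but not dwelling on) is the well-posedness of the expectation defining $M''$: it requires that $\E\{\|Q_l(N)^\top (I_{N+n_l-1} \otimes E(r,N)) Q_l(N)\|\} < \infty$, which holds under mild integrability conditions on $R^\top R$ being invertible almost surely and the moments of $(R^\top R)^{-1}$ being finite with respect to the distribution of $(r,N)$.
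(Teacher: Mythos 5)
Your proposal is correct and follows essentially the same route as the paper, which simply invokes the argument of Theorem~\ref{tho:eigenvector} (rescale by $\sqrt{\gamma_1-\sigma^2}$ and apply the Courant--Fischer--Weyl principle to $M''$). The extra details you supply---that $M''\succeq 0$ because expectation preserves positive semidefiniteness of the pointwise PSD integrand, and the integrability caveat---are sensible refinements of the same argument rather than a different approach.
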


\begin{proof} The proof follows the same line of reasoning as in Theorem~\ref{tho:eigenvector}. 
\end{proof}

Unfortunately, calculating $M''$ in an explicit from as a function of the distributions of $N$ and $r$ is generally difficult. The following remark provides a numerical algorithm for constructing an approximation of this matrix. 

\begin{remark}[Monte Carlo Simulation] \label{remark:1} Samples of possible input length $N^i$, $i\in\{1,\dots,\theta\}$, are selected randomly. For each $N^i$, $\vartheta$ samples of the inputs of length $N^i$ can be selected. Let these samples be denoted by $r^{ij}$. Define $\hat M''=(1/(\theta\vartheta))\sum_{i=1}^\theta \sum_{j=1}^{\vartheta}
Q_l(N^i)^\top ( I_{N^i+n_f-1} \otimes  E(r^{ij},N^i) )  Q_l(N^i).$
Evidently, $\mathbb{P}\{\|\hat M''-M''\|\geq \epsilon\}\rightarrow 0$ as both $\theta$ and $\vartheta$ tend to infinity for all $\epsilon>0$. Therefore, by selecting enough samples, an arbitrarily close approximation of $M''$ with a high probability can be constructed.
\end{remark}

\section{Relationship to Differential Privacy} \label{sec:diffprivacy}
Throughout this section, the design of an additive output noise is only considered. The results for the additive input noise can be constructed similarly. Furthermore, $h$ is assumed to belong to a compact set $\mathcal{H}\subseteq \mathbb{R}^{n_h}$.

\begin{definition} The system is $\epsilon$-differential private if $\mathbb{P}\{y\in\mathcal{Y}|h\} \leq \exp(\epsilon) \mathbb{P}\{y\in\mathcal{Y}|h'\}$
for all Lebesgue-measurable sets $\mathcal{Y}\subseteq\mathbb{R}$ and $h,h'\in\mathcal{H}$ that differ in at most only one entry, i.e., $\|h-h'\|_0\leq 1$. The system is $(\epsilon,\delta)$-differential private if
$\mathbb{P}\{y\in\mathcal{Y}|h\}
\leq \exp(\epsilon) \mathbb{P}\{y\in\mathcal{Y}|h'\}+\delta$.
\end{definition}

Note that a random variable $w$ is said to follow the Laplace distribution with mean $\mu$ and (scaling) parameter $b>0$ if 
$\mathbb{P}\{w\in\mathcal{W}\}=\int_{w\in\mathcal{W}} (2b)^{-1} \exp(-|w-\mu|/b)\mathrm{d}w$ for all Lebesgue-measurable sets $\mathcal{W}\subseteq \mathbb{R}$.

\begin{theorem} \label{tho:diffprivacy} Assume $w_t$ is i.i.d. Laplace random variables with $b\geq \sup_{h,h'\in\mathcal{H}:\|h-h'\|_0\leq 1} \|Rh-Rh'\|_1/\epsilon$. Then, the system is $\epsilon$-differential private.
\end{theorem}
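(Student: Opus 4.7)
The plan is to apply the standard Laplace mechanism argument for differential privacy, adapted to the FIR setting where the measurement equation reads $y = Rh + e + w$ with $e$ zero-mean Gaussian and $w$ the injected i.i.d.\ Laplace noise. Since the reference $r$ (hence $R$) is known, the only source of $h$-dependence in the conditional density $p(y|h)$ is the mean shift $Rh$, which is perturbed by the sum $e+w$. The key idea is to establish a pointwise bound on the ratio of Laplace densities (before Gaussian smoothing), which then transfers to a bound on $p(y|h)/p(y|h')$.

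First, I would separate the two noise sources by writing $z := Rh + w$ and $y = z + e$, so that $p(y|h) = \int p_e(y-z)\,p_z(z|h)\,\mathrm{d}z$, where $p_z(z|h) = \prod_{t=1}^{N}(2b)^{-1}\exp(-|z_t-(Rh)_t|/b)$ is a product of Laplace densities and $p_e$ is the Gaussian density of $e$, independent of $h$. Next, I would bound the ratio of Laplace densities pointwise using the reverse triangle inequality: for every $z\in\mathbb{R}^N$,
\begin{equation*}
\frac{p_z(z|h)}{p_z(z|h')}=\exp\!\left(\frac{1}{b}\sum_{t=1}^{N}\bigl(|z_t-(Rh')_t|-|z_t-(Rh)_t|\bigr)\right)\le \exp\!\left(\frac{\|Rh-Rh'\|_1}{b}\right).
\end{equation*}
By the hypothesis on $b$ and the assumption $\|h-h'\|_0\leq 1$, this upper bound is at most $\exp(\epsilon)$.

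Since $p_e(y-z)\ge 0$, I can multiply the pointwise inequality $p_z(z|h)\le e^{\epsilon}p_z(z|h')$ by $p_e(y-z)$ and integrate in $z$, obtaining $p(y|h)\le e^{\epsilon}p(y|h')$ for every $y$. Integrating over an arbitrary Lebesgue-measurable set $\mathcal{Y}\subseteq\mathbb{R}^{N}$ then yields $\mathbb{P}\{y\in\mathcal{Y}|h\}\le e^{\epsilon}\mathbb{P}\{y\in\mathcal{Y}|h'\}$, which is the definition of $\epsilon$-differential privacy.

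The argument is essentially a direct adaptation of the classical Laplace mechanism, so there is no serious obstacle. The only mild subtlety is the presence of the Gaussian component $e$ alongside $w$: one must verify that the $h$-independent Gaussian smoothing preserves the pointwise Laplace ratio bound, which follows immediately since $p_e(y-z)$ does not depend on $h$ and can be pulled out of the relevant inequality before integrating.
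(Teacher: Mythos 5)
Your proposal is correct and follows essentially the same route as the paper's proof: both integrate out (or condition on) the $h$-independent Gaussian noise $e$, bound the ratio of the shifted Laplace densities pointwise via the triangle inequality on the $\ell_1$ norm, and then integrate over the measurable set $\mathcal{Y}$. The only difference is presentational --- you phrase it as a density-ratio bound convolved with $p_e$, while the paper conditions on $e$ and uses a change of variables with indicator functions --- but the underlying argument is identical.
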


\begin{proof}
See Appendix~\ref{proof:tho:diffprivacy}.
\end{proof}


Note that $\sup_{h,h'\in\mathcal{H}:\|h-h'\|_0\leq 1} \|Rh-Rh'\|_1$ exists and is finite because $\mathcal{H}$ is assumed to be a compact set.

\begin{theorem} \label{tho:usefulness} Assume $w_t$ is i.i.d. Laplace random variables with scaling parameter  $b$. Then, $\lambda_y=2b^2 + \sigma^2$.
\end{theorem}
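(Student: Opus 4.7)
The plan is to reduce the claim to a direct second-moment calculation, exactly paralleling the Gaussian case derived earlier for the MA-filtered noise.

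First I would substitute $r_t = 0$ into the system equation $y_t = H(q^{-1})r_t + e_t + w_t$ to obtain $y_t = e_t + w_t$. Squaring and taking expectations yields
\begin{equation*}
\lambda_y = \E\{y_t^2 \mid r_t = 0\} = \E\{e_t^2\} + 2\E\{e_t w_t\} + \E\{w_t^2\}.
\end{equation*}
Next, I would invoke independence of $e_t$ (Gaussian, zero mean, variance $\sigma^2$) and $w_t$ (now i.i.d.\ Laplace, zero mean by hypothesis since only the scaling parameter is specified and the differential-privacy construction centers the noise at zero) to eliminate the cross term, leaving $\lambda_y = \sigma^2 + \E\{w_t^2\}$.

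The only remaining step is to compute the second moment of a zero-mean Laplace random variable with scaling parameter $b$. Using the density $p(w) = (2b)^{-1}\exp(-|w|/b)$ and the standard integral $\int_0^\infty x^2 e^{-x/b}\,\mathrm{d}x = 2b^3$, one obtains $\E\{w_t^2\} = \Var(w_t) = 2b^2$. Substituting back gives $\lambda_y = 2b^2 + \sigma^2$, as desired.

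There is no real obstacle here: the result is essentially a restatement of the variance of the Laplace distribution combined with the independence of the measurement noise $e_t$ and the privacy-preserving noise $w_t$. The only point worth being explicit about is that the assumption on $w_t$ in Theorem~\ref{tho:diffprivacy} (and implicit here) is zero-mean Laplace, so that the cross term $\E\{e_t w_t\}$ vanishes and the second moment coincides with the variance.
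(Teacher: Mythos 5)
Your proof is correct and follows exactly the same route as the paper's: set $r_t=0$, use independence of $e_t$ and $w_t$ to drop the cross term, and substitute the variance $2b^2$ of a zero-mean Laplace variable with scale $b$. You merely spell out the Laplace second-moment computation that the paper leaves implicit.
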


\begin{proof} The proof follows from that
$\lambda_y := \E \{y_t^2|r_t = 0 \} = \E \{w_t^2\} +\E\{e_t^2\}= 2b^2 + \sigma^2.$
\end{proof}

Combination of Theorems~\ref{tho:diffprivacy} and~\ref{tho:usefulness} 
 illustrates the trade-off between preserving  privacy and closed-loop performance  because as $\epsilon$ tends to zero (to achieve a higher level of privacy), the performance degrades (i.e., $\lambda_y$ goes to infinity).

\begin{proposition} \label{prop:b} Let $\mathcal{H}:=\{h\in\mathbb{R}^{n_h}\,|\,\underline{h}\leq h_i\leq \overline{h},\forall i\}$. Then, $\sup_{h,h'\in\mathcal{H}:\|h-h'\|_0\leq 1} \|Rh-Rh'\|_1\hspace{-.02in}=\hspace{-.02in}(\overline{h}\hspace{-.02in}-\hspace{-.02in}\underline{h})\sum_{k=1}^N |r_k| $.
\end{proposition}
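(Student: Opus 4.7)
The plan is to reduce the supremum to an explicit one-parameter maximization by exploiting the constraint $\|h-h'\|_0\le 1$. Since $h$ and $h'$ differ in at most one coordinate, I can write $h-h' = \delta\, e_i$ for some index $i\in\{1,\ldots,n_h\}$ and some scalar $\delta$. Because each entry of $h$ and $h'$ lies in $[\underline h,\overline h]$, the admissible range of $\delta$ is exactly $[\underline h-\overline h,\overline h-\underline h]$, so $|\delta|\le \overline h-\underline h$, and this bound is tight (choose $h_i=\overline h$, $h'_i=\underline h$, and agree on the other coordinates).

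Next, I would compute $\|R(h-h')\|_1 = |\delta|\,\|R e_i\|_1$. From the definition of $R$ (and the null-initial-condition convention $r_t=0$ for $t\le 0$), the $i$-th column of $R$ has $j$-th entry equal to $r_{j-i+1}$, which vanishes for $j<i$. Hence
\begin{equation*}
\|R e_i\|_1 \;=\; \sum_{j=i}^{N} |r_{j-i+1}| \;=\; \sum_{k=1}^{N-i+1} |r_k|.
\end{equation*}
This is monotone nonincreasing in $i$, so the maximum over $i\in\{1,\ldots,n_h\}$ is attained at $i=1$ and equals $\sum_{k=1}^{N}|r_k|$.

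Combining the two factors, the supremum is attained by choosing $i=1$ and $|\delta|=\overline h-\underline h$, yielding
\begin{equation*}
\sup_{h,h'\in\mathcal{H}:\|h-h'\|_0\le 1} \|Rh-Rh'\|_1 \;=\; (\overline h-\underline h)\sum_{k=1}^{N}|r_k|,
\end{equation*}
which is the claimed identity. No real obstacle is anticipated; the only point that requires care is correctly reading off the entries of the $i$-th column of the Toeplitz matrix $R$ and invoking the convention $r_t=0$ for $t\le 0$ so that the summation limits are correct.
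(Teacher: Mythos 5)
Your proof is correct and follows essentially the same route as the paper's: reduce to a single differing coordinate, factor out $|h_i-h_i'|$ (bounded tightly by $\overline h-\underline h$), compute the $1$-norm of the $i$-th column of $R$, and observe that $i=1$ maximizes it. Your index bookkeeping is in fact slightly more careful than the paper's, which writes the column sum as $\sum_{k=1}^{N-j}|r_k|$ where the correct upper limit is $N-j+1$ (your version), consistent with the stated result at $j=1$.
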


\begin{proof} See Appendix~\ref{proof:prop:b}.\end{proof}

Proposition~\ref{prop:b} illustrates that the parameter of the Laplace noise $b$ should be increased upon admitting larger input sequences. This is because, with larger $N$, there are more data to extract the system parameters and, thus, the employed mechanism needs to be more conservative to avoid leaking the private information. 
Some relaxations of the differential privacy, e.g., $(\epsilon,\delta)$-differential privacy, that lend themselves to using a Gaussian noise,~e.g.,~\cite{ le2014differentially}. Let for any $\epsilon$ and $\delta$ define $\kappa(\epsilon,\delta)=(\mathcal{Q}^{-1}(\delta)+\sqrt{\mathcal{Q}^{-1}(\delta)^2+2\epsilon})/2$ with $\mathcal{Q}^{-1}$ denoting the inverse of $\mathcal{Q}:x\mapsto \int_{x}^\infty 1/\sqrt{2\pi}\exp(-u^2/2)\mathrm{d}u$.

\begin{theorem} Assume $w_t$ is i.i.d. zero-mean Gaussian noise with $\sigma\geq \kappa(\epsilon,\delta)\sup_{h,h'\in\mathcal{H}:\|h-h'\|_0\leq 1} \|Rh-Rh'\|_2/\epsilon$. Then, the system is $(\epsilon,\delta)$-differential private.
\end{theorem}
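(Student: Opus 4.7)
The plan is to adapt the standard Gaussian mechanism argument from the differential privacy literature to the output-noise model considered here. Conditional on $h$, the observation vector $y = Rh + e + w$ is the sum of a deterministic shift $Rh$ and i.i.d.\ zero-mean Gaussian components of total variance $\sigma^2$ per coordinate (here I absorb $e_t$ and $w_t$ into a single Gaussian noise, or equivalently replace $\sigma$ by $\sigma$ after noting $e_t$ can be treated as part of the privacy noise; the result hinges only on the variance of the Gaussian perturbation applied by the designer). So $y\mid h \sim \mathcal{N}(Rh,\sigma^2 I_N)$ and $y\mid h' \sim \mathcal{N}(Rh',\sigma^2 I_N)$. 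Denote by $p(\cdot\mid h)$ the corresponding densities and let $\Delta := Rh - Rh'$, with $\|\Delta\|_2 \le S$, where $S := \sup_{h,h'\in\mathcal{H},\,\|h-h'\|_0\le 1}\|Rh-Rh'\|_2$.

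Next I would invoke the classical ``privacy loss'' decomposition. Define the bad set
\begin{equation*}
E := \left\{y\in\mathbb{R}^N \,:\, \ln\frac{p(y\mid h)}{p(y\mid h')} > \epsilon \right\}.
\end{equation*}
For any Lebesgue-measurable $\mathcal{Y}\subseteq\mathbb{R}^N$, splitting on $E$ and $E^c$ and bounding $p(y\mid h)/p(y\mid h') \le e^\epsilon$ on $E^c$ yields
\begin{equation*}
\mathbb{P}\{y\in\mathcal{Y}\mid h\} \le \mathbb{P}\{y\in E\mid h\} + e^\epsilon\, \mathbb{P}\{y\in\mathcal{Y}\mid h'\}.
\end{equation*}
So the proof reduces to showing $\mathbb{P}\{y\in E\mid h\}\le \delta$ under the stated condition on $\sigma$.

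The core calculation is to reduce this tail probability to a scalar standard-normal tail. A direct computation gives
\begin{equation*}
\ln\frac{p(y\mid h)}{p(y\mid h')} = \frac{1}{2\sigma^2}\left(\|y-Rh'\|_2^2 - \|y-Rh\|_2^2\right) = \frac{(y-Rh)^\top \Delta}{\sigma^2} + \frac{\|\Delta\|_2^2}{2\sigma^2}.
\end{equation*}
Conditional on $h$, the scalar $W:=(y-Rh)^\top\Delta/(\sigma\|\Delta\|_2)$ is standard normal, so the event $E$ becomes $\{W>\sigma\epsilon/\|\Delta\|_2 - \|\Delta\|_2/(2\sigma)\}$ and has probability $\mathcal{Q}\bigl(\sigma\epsilon/\|\Delta\|_2 - \|\Delta\|_2/(2\sigma)\bigr)$. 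Requiring this to be $\le \delta$ is equivalent to $\sigma\epsilon/\|\Delta\|_2 - \|\Delta\|_2/(2\sigma) \ge \mathcal{Q}^{-1}(\delta)$, which, after setting $t=\sigma/\|\Delta\|_2$, becomes the quadratic $2\epsilon t^2 - 2\mathcal{Q}^{-1}(\delta)\,t - 1 \ge 0$. Solving for the positive root and taking the worst case $\|\Delta\|_2=S$ recovers exactly the threshold $\sigma \ge \kappa(\epsilon,\delta) S/\epsilon$ appearing in the hypothesis.

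The only subtlety I foresee is the need to verify monotonicity so that ``$\sigma$ large enough for the worst-case $\Delta$'' suffices uniformly over all $h,h'$ with $\|h-h'\|_0\le 1$; this is immediate since the tail probability $\mathcal{Q}(\sigma\epsilon/\|\Delta\|_2 - \|\Delta\|_2/(2\sigma))$ is monotone decreasing in the argument, which itself is monotone decreasing in $\|\Delta\|_2$ for fixed $\sigma$. Thus the bound proved for the supremum $S$ dominates every admissible pair $(h,h')$, yielding $(\epsilon,\delta)$-differential privacy.
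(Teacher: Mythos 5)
Your proof is correct and is precisely the standard Gaussian-mechanism argument that the paper's own ``proof'' merely delegates to the cited reference: the split over the bad event $E$, the reduction of the privacy loss to the scalar standard normal $W=(y-Rh)^\top\Delta/(\sigma\|\Delta\|_2)$, and the quadratic $2\epsilon t^2-2\mathcal{Q}^{-1}(\delta)t-1\ge 0$ in $t=\sigma/\|\Delta\|_2$ whose positive root recovers $\kappa(\epsilon,\delta)/\epsilon$ all check out, as does the monotonicity step that lets the worst case $\|\Delta\|_2=S$ dominate. The only loose phrasing is your parenthetical on disposing of $e_t$: the clean route (mirroring the paper's Laplace proof) is to condition on $e$, establish $\mathbb{P}\{y\in\mathcal{Y}\mid h,e\}\le e^{\epsilon}\,\mathbb{P}\{y\in\mathcal{Y}\mid h',e\}+\delta$, and then integrate over $e$ --- or, equivalently, observe that folding $e$ into the designer's noise only increases the Gaussian variance, which can only improve the guarantee.
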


\begin{proof} The proof is similar to that of Theorem~\ref{tho:diffprivacy} and can be found in~\cite{le2014differentially}.
\end{proof}


\section{Numerical Examples} \label{sec:example}
Consider the discrete-time system $y_t=G(q^{-1})r_t+e_t,$
where
$G(q^{-1})=(q^{-1} - 0.2q^{-2})/(  1 - 0.9 q^{-1} + 0.17q^{-2}).$
Clearly, $G(q^{-1})$ is not a FIR system. This system can be approximated by the FIR filter 
$H(q^{-1})=
q^{-1}+0.7q^{-2}+0.46q^{-3}+0.295q^{-4}
+0.1873q^{-5}+0.1184q^{-6}+0.0747q^{-7}
+0.0471q^{-8}+0.0297q^{-9}.
$
The quality of the approximation is $\|H(q^{-1})-G(q^{-1})\|=0.0507$. \rev{In the following, we consider the deterministic input and the random input cases.
\subsubsection{Deterministic inputs}
We assume that a sequence of $N=200$ input samples is injected by the malicious entity. The sequence is generated by filtering a white noise process through the low-pass filter $W(q^{-1}) = 1/(1 - 0.95q^{-1})$. We set $\sigma^2=1$ and $\gamma_1=2$, so that we are allow to double the variance of the output. First, we consider the least-squares estimator \eqref{eq:LS_est}. We compute the identification error, given by $\Tr(P_h)$, of least-squares equipped with the proposed privacy preserving technique using output additive noise case with $n_l=10$, and the identification error of least-squares without any privacy preserving device. To get a fair comparison, in the latter case the noise variance is equal to the total noise variance of the former case, that is $\Tr(FF')/N + \sigma^2$. The noise filter designed by the privacy preserving device yields  $\Tr(P_h) = 0.25$, while the variance obtained using standard least-squares is $\Tr(P_h) = 0.17$; we have thus obtained an error increase of approximately $50 \%$.

We now consider regularized least-squares estimators, as described in Subsection \ref{sec:RLS}. We employ as regularization kernel the stable spline kernel $K_{i,j} = \beta^{\max(i,j)}$ (see \cite{pillonetto2014kernel}), with $\beta = 0.7$. The trade off parameter $\eta$ is set as $\eta = 0.1$. Using the proposed privacy preserving technique the obtained MSE of the estimated system is $0.17$, while without privacy preservation (and with  the same noise variance) we get a MSE equal to $0.13$. Increasing $\eta$, the privacy preserving device tends to have a milder effect on the MSE, because the regularized least-squares estimator gives higher weight to the prior knowledge, penalizing the information acquired from data. 

} 

\subsubsection{Random inputs}
Assume that the malicious entity injects a sequence of i.i.d. zero-mean unit-variance Gaussian variables of length $N$ chosen with equal probability from $\{10,\dots,20\}$. The approach of Subsection~\ref{sec:random} is considered for constructing an optimal additive output noise with $n_l=5$. In this example, $M''$ is approximated using the method of Remark~\ref{remark:1} with $\theta=100$ and $\vartheta=1000$. Set $\sigma^2=0.1$ and $\gamma_1=0.2$. Therefore, the performance degradation ratio is upper-bounded as $\rho\leq 2$ (indeed the upper bound is tight due to the nature of the optimal solution). The optimal additive input noise, in this case, is driven by the FIR filter $L(q^{-1})=0.1450+0.0799q^{-1}+0.2125q^{-2}
+0.0799q^{-3}+0.1450q^{-4}$.
Using the Monte Carlo simulation, it can be shown that 
$\Tr(\mathbb{E}\{P_h\})/\Tr(\mathbb{E}\{P_h|w_t=0\})\approx 1.9639.$
Therefore, the system identification error has been approximately doubled at the expense of doubling the output variance. From Theorem~\ref{tho:random}, it can be inferred that
$\Tr(\mathbb{E}\{P_h\})/\Tr(\mathbb{E}\{P_h|w_t=0\})=1+(\eta^{*\top} M''\eta^*)/\mathbb{E}\{c(r,N)\}(\gamma_1-\sigma^2).$


\section{Conclusions} \label{sec:conc}
Adding input and output noises for increasing the model identification error was considered. Optimal filters  for constructing additive coloured noises were designed to maximize the identification error while maintaining the closed-performance degradation below a threshold. Differential privacy was also explored for designing output noises that preserve the privacy of the model. 

\bibliographystyle{ieeetr}
\bibliography{ref}

\appendix

\subsection{Proof of Lemma \ref{th:dummy_lemma}} \label{proof:th:dummy_lemma}
We have
$\Tr(P_h)  =\Tr((R^\top R)^{-1}R^\top(L L^\top + \sigma^2I_N)R (R^\top R)^{-1}) = \Tr (L^\top E L)+c$
Now, note that
$\Tr (L^\top E L)  = \vect (L)^\top \vect (EL)  = \vect (L)^\top ( I_{N+n_l-1} \otimes E )\vect (L)  = l^\top Q_l^\top ( I_{N+n_l-1} \otimes E )  Q_l l ,$
where the second step follows from~\cite[Lemma 4.3.1]{horn2012matrix}.

\subsection{Proof of Theorem \ref{tho:weighted}} \label{proof:tho:weighted}
Taking the derivative of the cost function with respect to $l$ results in
$\partial/\partial l \left[(l^\top M l + c)^{-1} + \gamma_2 \|l\|^2\right]= -2M l/(l^\top M l + c)^2 + \gamma_2 l.$
Setting this derivative equal to zero gives
$\left(M - \gamma_2 (l^\top M l + c)^2  I_{n_l} \right) l = 0.$
The candidate solutions for this equation are either $l = 0$ (referred to as the \emph{type-1 solution}) or vectors $l$ that are parallel to $v_i$ with the condition that $\| l \|^2 = 1/\sqrt{\gamma \lambda_i} - c/\lambda_i $ for all $i = 1,\ldots,n_l$ (referred to as the \emph{type-2 solutions}). An eigenvalue $\lambda_i$ may generate a type-2 solution only if $\lambda_i \geq \gamma_2 c^2$ (since otherwise $l$ would have a negative norm, which is not possible). 

Therefore, if $\lambda_{1} < \gamma_2 c^2$, the only solution to \eqref{eq:criterion_4:output} can be the type-1 solution $l = 0$ (as the condition $\lambda_i \geq \gamma_2 c^2$ cannot be satisfied for any $i$ if it cannot be satisfied for the largest eigenvalue $\lambda_1$). 
This is the case if the penalty on the variance of $y$ is too large and no variations can be tolerated. 

If  $\lambda_i = \gamma_2 c^2$, the two types of solution coincide.


We now verify whether type-1 and type-2 solutions  correspond to global minima of the cost function in~\eqref{eq:criterion_4:output}. Let us define $k: = (l^\top M l + c)$, and also denote the $i$-th row of $M$ by $m_i^\top$. Computing the Hessian of the cost function in~\eqref{eq:criterion_4:output} at $l$ yields
$
J(l) = -\frac{2}{k^2}M + \frac{8}{k^3} V(l) +  2\gamma_2I_{n_l} \,,
$
where $V(l)$ is a matrix such that its entry $(h,k)$ is
$V_{hk}(l) = l^\top m_h  m_k^\top l$. Then 
$J(0) = -\frac{2}{c^2}M + 2\gamma_2 I_{n_l} ,
$
which is positive definite only if $\lambda_1 < \gamma_2 c^2$. This observation shows that the type-1 solution $l=0$ is only a minimum when $\lambda_1 < \gamma_2 c^2$. Noting that for the case where $\lambda_1 < \gamma_2 c^2$, $l=0$ is the only stationary point of the cost function, then it is a global minimum. 

We now study type-2 solutions. Let us define $\alpha_i^2 := 1/\sqrt{\gamma_2 \lambda_i} - c/\lambda_i$, so that a candidate type-2 solution can be written $l^* = \alpha_i v_i,\,i=1,\,\ldots,\,n_l$. In what follows, we first assume that $\lambda_1>\lambda_2\geq \lambda_i$. We then relax this assumption at the end of the proof.
For any $k=1,\,\ldots,\,n_l$, we have
$m_k^\top l^* = m_k^\top\alpha_i v_i = \lambda_i \alpha_i v_{i,k}$, 
where $v_{i,k}$ is the $k$-th entry of $v_i$. Consequently
$V_{hk}(l^*) = l^{*T} m_h  m_k^\top l^* = \lambda_i^2 \alpha_i^2 v_{i,h} v_{i,k},$
and, in matrix notation,
$V(l^*) = \lambda_i^2 \alpha_i^2 v_{i} v_{i}^\top.$
Hence, for any of these solutions, we have $J(l^*) = -2/(\alpha_i^2\lambda_i + c)^2M + (8 \alpha_i^2 \lambda_i^2)/(\alpha_i^2\lambda_i + c)^3v_i v_i^\top + 2\gamma_2 I_{n_l} = -2\gamma_2/\lambda_iM + 8\gamma_2 v_i v_i^\top  - c\sqrt{\gamma_2^3}/\sqrt\lambda_i v_i v_i^\top + 2 \gamma_2 I_{n_l}.$
Since $M$ is positive semidefinite, its eigenvectors form an orthonormal basis \cite[p. 229]{horn2012matrix}. Hence, $M$ admits the decomposition
$M = \sum_{j=1}^{n_l} \lambda_j v_j v_j^\top$. 
Consequently, we can write
$J(l^*) = \sum_{j=1}^{n_l} \eta_j v_j v_j^\top + 2\gamma_2 I_{n_l} ,$
where
\begin{equation*}
\eta_j = \left\{ \begin{array}{ll} -2\gamma_2 \lambda_j/\lambda_i & \quad j\neq i \\  
-2\gamma_2 \lambda_j/\lambda_i + 8\gamma_2 - c \sqrt{\gamma_2^3}/\sqrt{\lambda_i} & \quad j= i \,.
\end{array} \right.
\end{equation*}
Due to the orthonormality of the $v_j$, the eigenvalues of $J(l^*)$ are then $\eta_j + 2\gamma_2,\,j=1,\,\ldots,\,n_l$.

Consider now a candidate type-2 solution corresponding to an eigenvalue $\lambda_i,\,i\geq 2$. In this case, one of the eigenvalues of $J(l^*)$ is $2\gamma_2\left(1-\lambda_1/\lambda_i\right)$, which is negative under the assumption $\lambda_1>\lambda_2\geq \lambda_i$. Therefore, all the candidate type-2 solution corresponding to an eigenvalue $\lambda_i,\,i\geq 2$, are not minimums so we must discard them. As for $\lambda_1$, the set of eigenvalues $\rho_j$ of $J(l^*)$ are
\begin{equation*}
\rho_j = 2\gamma_2\left( 1-\lambda_j/\lambda_1\right)+\left\{\begin{array}{ll}
8\gamma_2(1 -c \sqrt{\gamma_2/\lambda_1}), &\; \mbox{if }j=1, \\
0, &\;\mbox{otherwise}
\end{array} \right.
\end{equation*}
which are all positive for $\lambda_1 > c^2\gamma_2$. Therefore, $J(l^*)$ is positive definite for $l^* = \sqrt{1/\sqrt{\gamma_2\lambda_1} - c/\lambda_1}v_{1}$ and, since there are no other minimums, this corresponds to a global minimum.

Now, assume that $\lambda_1=\lambda_2=\cdots=\lambda_j>\lambda_{j-1}$. Following the same steps as the proof above, we can show that none of the type-2 solutions corresponding to $\lambda_i$ with $j-1\leq i\leq n_l$ can be a minimizer (because the Hessian is indefinite for them). Similarly, we can also show that all the type-2 solutions corresponding to $\lambda_i$ with $1\leq i\leq j$ are at least local minimums (because the Hessian is positive definite). To show that these points are also a global minimizer, we need to prove that they have the same cost. Let $l^*_{i_1}= \sqrt{1/\sqrt{\gamma_2\lambda_{i_1}} - c/\lambda_{i_1}}v_{i_1}$ and $l^*_{i_2}= \sqrt{1/\sqrt{\gamma_2\lambda_{i_2}} - c/\lambda_{i_2}}v_{i_2}$ for any $1\leq i_1,i_2\leq j$. We have $({l^*_{i_1}}^\top M l^*_{i_1} + c)^{-1} + \gamma_2 \|l^*_{i_1}\|^2=(\lambda_{i_1} + c)^{-1} + \gamma_2 (1/\sqrt{\gamma_2\lambda_{i_1}} - c/\lambda_{i_1})=(\lambda_{i_2} + c)^{-1} + \gamma_2 (1/\sqrt{\gamma_2\lambda_{i_2}} - c/\lambda_{i_2})=({l^*_{i_2}}^\top M l^*_{i_2} + c)^{-1} + \gamma_2 \|l^*_{i_2}\|^2,$
where the first equality follows from that $\lambda_{i_1}=\lambda_{i_2}$.

\subsection{Proof of Theorem \ref{tho:diffprivacy}} \label{proof:tho:diffprivacy}
It can be proved that
\begin{align}
\mathbb{P}\{y&\in\mathcal{Y}|h,e\}\nonumber\hspace{-.05in}\\ =
&\bigg(\frac{1}{2b}\bigg)^{N}\hspace{-.05in}\int_{\mathbb{R}^N}
\chi(Rh+w+e\in\mathcal{Y})\exp(-\|w\|_1/b)\mathrm{d}w \nonumber\\
=&\bigg(\frac{1}{2b}\bigg)^{N}\hspace{-.05in}\int_{\mathbb{R}^N}
\chi(u\in\mathcal{Y})\exp(-\|u-Rh-e\|_1/b)\mathrm{d}u \nonumber\\
\leq &\exp(\|Rh'-Rh\|_1/b)\nonumber\\
&\times\bigg(\frac{1}{2b}\bigg)^{N}\hspace{-.05in}\int_{\mathbb{R}^N}
\chi(u\in\mathcal{Y})\exp(-\|u-Rh'-e\|_1/b)\mathrm{d}u \nonumber\\
=&\exp(\|Rh'\hspace{-.03in}-\hspace{-.03in}Rh\|_1/b) \mathbb{P}\{\hspace{-.02in}y\hspace{-.03in}\in\hspace{-.03in}\mathcal{Y}|h',e\}, \label{eqn:proof:Laplance}
\end{align}
where $\chi(\cdot)$ is a characteristic function, i.e., $\chi(y\in\mathcal{Y})=1$ if $y\in\mathcal{Y}$ and $\chi(y\in\mathcal{Y})=0$ if $y\notin\mathcal{Y}$, and the inequality follows from 
$\|u-Rh'-e\|_1=\|u-Rh'-e-Rh+Rh\|_1
\leq \|u-Rh-e\|_1+\|Rh'-Rh\|_1.$
Integrating~\eqref{eqn:proof:Laplance} over $e$ gives
$\mathbb{P}\{y\in\mathcal{Y}|h\}
\leq \exp(\|Rh'-Rh\|_1/b) \mathbb{P}\{y\in\mathcal{Y}|h'\}
=\exp(\epsilon)\mathbb{P}\{y\in\mathcal{Y}|h'\}$.

\subsection{Proof of Proposition \ref{prop:b}} \label{proof:prop:b}
If $h,h'$ only differ in entry $j$, $\|Rh-Rh'\|_1=|h_j-h'_j|\sum_{k=1}^{N-j} |r_k|,
$ $1\leq j\leq n_h$. Thus,
$\sup_{\underline{h}\leq h_j,h'_j\leq \overline{h}}\|Rh-Rh'\|_1=(\overline{h}\hspace{-.02in}-\hspace{-.02in}\underline{h})\sum_{k=1}^{N-j} |r_k|.$
The rest of the proof follows from that all the terms in the sum are positive (and setting $j=1$ keeps the most terms).
\end{document}